\newcommand{\R}{\mathbb R}
\newcommand{\s}{\mathbb S}
\newcommand{\h}{\mathcal H}
\newcommand{\CS}{\mathrm{C}^{\infty}(\mathbb{S}^{1})}
\newcommand{\hCS}{\mathrm{\hat C}^{\infty}(\mathbb{S}^{1})}
\newcommand{\nCS}{\mathrm{C}_0^{\infty}(\mathbb{S}^{1})}
\newcommand{\Circle}{\mathbb{S}^{1}}
\newcommand{\VectS}{\mathrm{Vect}^{\infty}(\mathbb{S}^{1})}
\newcommand{\Vect}{\mathrm{Vect}}
\newtheorem{thm}{Theorem}[section]
\newtheorem{cor}[thm]{Corollary}
\newtheorem{prop}[thm]{Proposition}
\theoremstyle{definition}
\theoremstyle{remark}
\begin{document}

\title[Restrictions on the geometry]{Restrictions on the geometry of the \\
periodic vorticity equation}

\author{Joachim Escher}
\address{Institute for Applied Mathematics, University of Hanover, D-30167 Hanover, Germany}
\email{escher@ifam.uni-hannover.de}
\author{Marcus Wunsch}
\address{RIMS, Kyoto University, Kyoto 606-8502, Sakyo-ku Kitashirakawa Oiwakecho,  Japan}
\email{mwunsch@kurims.kyoto-u.ac.jp}

\subjclass[2000]
{
35Q35; 
53D25; 
58D05 
}

\keywords{Non-metric Euler equation; geodesic flow; diffeomorphism group of the circle}

\maketitle

\begin{abstract}
We prove that several evolution equations arising as mathematical models for fluid motion cannot be realized as metric Euler equations on the Lie group {\sc Diff}$^\infty(\s^1)$ of all smooth and orientation-preserving diffeomorphisms on the circle. These include the quasi-geostrophic model equation, cf. \cite{ccf-ann}, the axisymmetric Euler flow in $\R^d$ \cite{oz}, and De Gregorio's vorticity model equation as introduced in \cite{deg1}.
\end{abstract}

\section{Introduction}
\noindent
In this article, we are concerned with certain geometric aspects of members of the family of fractional differential equations given by
\begin{eqnarray} \label{proto}
\begin{cases}
m_t + u m_x + b \; u_x m = 0, \quad t > 0,\ x\in\s^1 \\[0.1cm]
m = (-\Delta)^{a/2}\;u + \int_{\s^1} u(x)\;dx,\quad u\in \CS, 
\end{cases}
\end{eqnarray}
where $a$, $b$ are real numbers, and where $\s^1 = \R / \mathbb Z$ denotes the unit circle of length $1$. If $a\ne0$, the operator 
$(-\Delta)^{a/2}$ has to be understood as the Fourier multiplication operator on $\CS$ induced by the symbol $\vert k\vert^a$ with $k\ne 0$. The case $a=0$ is particular. Here we simply set $m=u$ in (\ref{proto}).
 \par
The family \eqref{proto} is a prototype for many evolution equations pertaining to the mathematical modeling of fluid dynamics. 
\\
For solutions $u$ with vanishing spatial mean (a property which is preserved by the flow of \eqref{proto}), we distinguish the following important cases. 
In the case of $a = 0$, \eqref{proto} reduces to the well-known Burgers equation, cf. \cite{burgers}. 
On the other hand, if $a = 1$, then one obtains De Gregorio's vorticity model equation \cite{deg1} for $b = -1$, and the quasi-geostrophic model equation of \cite{ccf-ann, ccf} (cf. also \cite{CCCF}) in one space dimension (coinciding with the Birkhoff-Rott model equation) for $b = 1$. 
For arbitrary $b \neq 0$, we get the generalized CLM (Constantin-Lax-Majda) equation \cite{osw,clm}, a one-dimensional model for the three-dimensional vorticity equation, with parameter $\alpha = -1/b$.  
Finally, if $a = 2$, then \eqref{proto} simplifies to the Hunter-Saxton equation, cf. \cite{hu:sa, len:weak, len08}, an equation modeling the propagation of weakly nonlinear orientation waves in a massive nematic liquid crystal if $b = 2$, and if $b = 3$, the Burgers equation reappears in disguise, differentiated twice in space. 
For integers $d$, the prescription $b = (d - 3)/(d - 1)$ turns \eqref{proto} into the equation describing the axisymmetric Euler flow in $\R^d$, see \cite{pj,oz,cw,st}, while arbitrary values of $b$ correspond to the generalized Proudman-Johnson equation with parameter $\alpha = -b$, cf. \cite{okamoto,mw-pj, chow}. \\
For $a = 2$, the authors of \cite{lmt} studied solutions to \eqref{proto} whose mean does not vanish. 
Special cases include the $\mu$HS equation \cite{klm} (b = 2) lying ''mid-way'' between the Camassa-Holm equation \cite{ch} describing the unidirectional irrotational free surface flow of a shallow layer of an inviscid fluid,  and the Hunter-Saxton equation, and also, for $b = 3$, the $\mu$DP equation\footnote{
{\sc J. ~Escher, M. ~Kohlmann, B. ~Kolev}, \\
Geometric aspects of the periodic $\mu$DP equation (preprint arXiv:1004.0978)}, which is a generalization of the Degasperis-Procesi equation \cite{dp,ek}. \par
Some of the special cases of \eqref{proto} are not only relevant 
in hydrodynamics, they also play an important geometric role as Euler equations for the geodesic flow on the group of orientation-preserving diffeomorphisms of the circle $\s^1$ (modulo the subgroup of rigid rotations if the means of the solutions vanish) with respect to a Riemannian metric. 
These particular cases are, for zero-mean solutions, the Burgers equation ($a = 0$, $b = 2$), the generalized CLM equation ($a = 1$, $b = 2$), and the Hunter-Saxton equation ($a = b = 2$). 
There also exists a Riemannian connection in the case of the $\mu$HS equation \cite{klm}. The geodesic flow induced by the generalized CLM equation has also been studied in some detail\footnote{{\sc J. ~Escher, B. ~Kolev, M. ~Wunsch}, \\ The geometry of a vorticity model equation (preprint)}.
\par
The existence of a {\it Riemannian} connection, as shown in the groundbreaking study of \cite{ek}, is, however, {\em{not}} necessary for recasting evolution equations such as \eqref{proto} as Euler equations for geodesic flows: One can also define geodesic flows with respect to {\it linear} connections. 
The corresponding concept of non-metric Euler equations (to be explained in Section 2) allows us to interpret \textit{any} of the members of the family \eqref{proto} as geodesic flows with respect to a linear connection. 
\par
Nevertheless, the metric case is of vital importance, since qualitative properties of solutions may be rooted in the geometrical structure of \eqref{proto} (cf. \cite{kolev} for a discussion of the case for the $b$-equation, in which $a=2$). 
Therefore 
the object of this study lies in finding values of $b$ for which the linear connection associated to \eqref{proto} \textit{does not} coincide with a Riemannian one. Our result is complete in the case $a\ne 1$; in the case $a=1$, we get a quite satisfactory result, which allows also for interesting applications in this case. 
In particular, we will find that the quasi-geostrophic model equation \cite{ccf-ann}, the axisymmetric Euler flow in $\R^d$ \cite{oz}, and De Gregorio's vorticity model equation \cite{deg1} can only be realized as non-metric Euler equations. 
\par
In Table \ref{paradigm}, we summarize three paradigmatic examples. It shows special cases of (\ref{proto}) for the three inertia operators $\hbox{id}_{\s^1}$,  $\Lambda_\mu$ and $\Lambda^2_\mu$, respectively. Note that the corresponding metrics then are induced by $L_2(\s^1)$, $H^1(\s^1)$, and $H^2(\s^1)$, respectively, in the general case of non-zero mean 
solutions, and by the homogeneous Sobolev spaces $\dot H^1(\s^1)$ and $\dot H^2(\s^1)$ in the case of zero mean solutions.

\begin{table} \label{paradigm}
\begin{center}
\caption{The family of equations (\ref{proto}) induced by the inertia operators $\hbox{id}_{\s^1}$, $\Lambda_\mu$, and $\Lambda_\mu^2$. Italics indicate equations satisfied by evolutions with vanishing spatial mean, while bold letters highlight equations with non-zero mean solutions. We single out the second row, which contains the equations for which \eqref{proto} can be realized as a metric Euler equation, while the section below incorporates non-metric Euler equations. This will be seen from the analysis in Section \ref{metricity}. }
\end{center}
\begin{center}
          \begin{tabular}{ | p{2.3cm}   | p{1.55cm} | p{3.3cm} | p{3.9cm} |}
    \hline 
         &  \vfill $\hbox{id}_{\s^1}$ &  \vfill $\Lambda_\mu = \sqrt{-\Delta} + \mu(\cdot)$ & \vfill $\Lambda^2_\mu = -\Delta + \mu(\cdot)$ \\[0.1cm]  \hline \hline 
  \hfill \vfill $b = 2$ &\vfill \textit{Burgers equation} & \vfill \textit{metric} {\it gCLM equation} \cite{mw-geod}& \vfill{\it Hunter-Saxton equation} \cite{hu:sa},
    \\ & \cite{burgers}&  & {\bf $\mu$HS equation} \cite{klm}\\[0.1cm] \hline \hline
    \vfill $b = 3$ &  & & \vfill {\bf $\mu$DP equation} \cite{lmt}\\[0.1cm]  \hline
    \vfill $b = 1$ & &{\it quasi-geostrophic model equation} \cite{ccf-ann}&  \\[0.1cm]  \hline
    \vfill $b = -1$ & & {\it vorticity model equation} \cite{deg1} & \\[0.1cm]
    \hline 
   \vfill $b = \frac{d - 3}{d - 1}$ & & & {\it axisymmetric Euler flow in} $\R^d$, $d \ge 2$ \cite{oz}\\[0.1cm] \hline
    \vfill $b = -a = \frac{-1}{\alpha}$ & & \vfill {\it gCLM vorticity model equation} & \vfill{\it gPJ equation with parameter $a$} \cite{okamoto},\\
    &&{\it with parameter $\alpha$} \cite{osw}& $\mu${\bf{b-equation}} \cite{lmt} \\ \hline
    
    \end{tabular}
\end{center}
\end{table}

\section{Euler Equations on Lie Groups}
\noindent
{\sc V. Arnold} pointed out in his seminal paper \cite{ar} that the Euler equations describing the motion of a perfect fluid can be recast as the geodesic flow for right-invariant metrics on the Lie group of volume-preserving diffeomorphisms. Subsequently, {\sc Ebin \& Marsden} rigorously justified this geometric picture in \cite{em}. 
While this general Euler equation was at first derived for the Levi-Civita connection of a one-sided invariant Riemannian metric on a Lie group $G$, {\sc Escher \& Kolev} found that the theory can be extended to the more general setting of a one-sided invariant linear connection \cite{ek}. In what follows, we shall give a short account of this generalization for the readers' convenience. \par
Let $G$ be a Lie group and $\frak{g}$ its Lie algebra (the tangent space of $G$ at its unit element $e$). An isomorphism $A : \frak{g} \rightarrow \frak{g}^\ast$ which is symmetric with respect to the inner product on $\frak{g}$, 
$$\langle Au, v \rangle = \langle Av, u \rangle, \quad \mbox{ for all } u,\;v \in \frak{g},$$
is called an {\it inertia operator} on $G$. By right translation, $A$ gives rise to a right-invariant metric on $G$ which we shall henceforth denote by $\varrho_A$. \\
Let $[\cdot,\cdot]$ be the Lie bracket on the smooth sections of the tangent bundle over $G$, and define the bilinear operator $B$ by
$$B(u,v) = \frac{1}{2} \left[ \left( \mbox{ad}_u\right)^\ast (v) + \left( \mbox{ad}_v\right)^\ast (u) \right],$$
where $(\mbox{ad}_u)^\ast$ is the adjoint with respect to the induced metric $\varrho_A$ of the natural action of $\frak{g}$ on itself. In analogy with the Christoffel symbols in finite-dimensional Riemannian geometry, $B$ is called {\it Christoffel operator}. It then turns out that the Levi-Civita connection $\nabla$ on $G$ induced by $\varrho_A$ can be represented in terms of the Lie bracket $[\cdot,\cdot]$ and the Christoffel operator $B$ as 
\begin{equation} \label{levi}
\nabla_{\xi_u} \; \xi_v = \frac{1}{2} [\xi_u,\xi_v] + B(\xi_u, \xi_v),
\end{equation}
where $\xi_u$ is the right-invariant vector field on $G$ 
generated by $u\in\frak{g}$.
These statements, as well as the proposition below, were proven in \cite{ek}.
\begin{prop}\label{gEuler}
A smooth curve $g(t)$ on a Lie group $G$ is a geodesic for a right-invariant linear connection $\nabla$ defined by \eqref{levi} if and only if its Eulerian velocity $u = g' \circ g^{-1}$ satisfies the first-order equation
\begin{equation} \label{gen:eul}
u_t = - B(u,u).
\end{equation}
This equation is known as the {\bf Euler equation} on $G$ with respect to $A$.
\end{prop}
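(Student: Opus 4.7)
The plan is to rewrite the geodesic equation $\nabla_{g'} g' = 0$ on $G$ as an ODE on $\mathfrak{g}$ via the Eulerian velocity $u = g' \circ g^{-1}$. By the very definition of right-invariance, this amounts to $g'(t) = \xi_{u(t)}(g(t))$ for the time-dependent algebra element $u(t)$, and the task is to push the covariant derivative through the right-translation.

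First I would compute the covariant derivative of $g'$ along the curve $g(t)$. Extending $g'$ locally in space by the time-dependent right-invariant field $\xi_{u(t)}$, the standard splitting of $\frac{D}{dt}$ into explicit $t$-dependence and a spatial covariant derivative gives
\begin{equation*}
\frac{D g'}{dt}(t) = \xi_{\dot u(t)}(g(t)) + \bigl(\nabla_{\xi_{u(t)}} \xi_{u(t)}\bigr)(g(t)).
\end{equation*}
Applying \eqref{levi} to the second summand and exploiting $[\xi_u,\xi_u]=0$ kills the bracket term, so $\nabla_{\xi_u}\xi_u = B(\xi_u,\xi_u)$. Because $B$ is originally defined on $\mathfrak{g}$ and its extension to right-invariant vector fields is obtained by right-translating the algebra-valued expression, this equals $\xi_{B(u,u)}$. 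Combining both pieces,
\begin{equation*}
\nabla_{g'} g'(t) = \xi_{u_t(t) + B(u(t),u(t))}\bigl(g(t)\bigr).
\end{equation*}

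Since $w \mapsto \xi_w(g)$ is a linear isomorphism from $\mathfrak{g}$ onto $T_g G$ at each $g$, the vanishing of $\nabla_{g'}g'$ is equivalent to $u_t = -B(u,u)$. This yields the forward direction immediately, and the converse follows by solving the non-autonomous flow equation $g'(t) = \xi_{u(t)}(g(t))$ with a prescribed initial value $g(0)\in G$ (which is uniquely solvable) and using the same identity to check that the resulting curve is a geodesic.

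The main obstacle will be making the first step rigorous: interpreting $\frac{D}{dt}$ applied to a time-dependent extension of $g'$ and cleanly separating the temporal contribution $\xi_{\dot u}$ from the spatial piece $\nabla_{\xi_u}\xi_u$, together with the identification $\nabla_{\xi_u}\xi_u = \xi_{B(u,u)}$, which rests on the right-invariance of both $\nabla$ and $B$. Once that bookkeeping is in place, the equivalence is purely algebraic.
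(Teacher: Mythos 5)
Your argument is correct and is essentially the standard one: the paper does not prove Proposition \ref{gEuler} itself but attributes it to \cite{ek}, where the equivalence is obtained by exactly the computation you describe --- writing $g'=\xi_{u(t)}(g)$, splitting the covariant derivative along the curve into the temporal part $\xi_{u_t}$ and the spatial part $\nabla_{\xi_u}\xi_u=\xi_{B(u,u)}$ (the bracket term vanishing by antisymmetry), and using that $w\mapsto\xi_w(g)$ is an isomorphism of $\frak{g}$ onto $T_gG$. The only point that deserves an explicit word in the intended Fr\'echet setting $G=\hbox{{\sc Diff}}^\infty(\s^1)$ is your parenthetical claim that $g'(t)=\xi_{u(t)}(g(t))$ is uniquely solvable: Picard--Lindel\"of is not available in Fr\'echet spaces, so this step must instead be justified by the classical existence and uniqueness of flows of time-dependent smooth vector fields on a compact manifold.
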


Note that (\ref{levi}) defines a right invariant linear connection for any bilinear operator $B\,:\,\frak{g}\times\frak{g}\to\frak{g}$. In general, however, this connection is not compatible with a Riemannian structure. If there is no Riemannian metric on $G$ which is preserved by the connection \eqref{levi}, we shall call \eqref{gen:eul} a {\it non-metric Euler equation} on $G$.  

\section{Metricity} \label{metricity}
\noindent
In this section, we shall prove the main result of our paper. We first specialize the setting of the preceding section to the case 
$G=${\sc Diff}$^\infty(\s^1)$,  the Fr\'{e}chet Lie group of all smooth and orientation preserving diffeomorphisms on the circle $\s^1$.
\par

Since the tangent bundle $T\Circle \simeq \s^1 \times\, \R$ is trivial, $\Vect^\infty(\s^1)$, the space of smooth vector fields on $\Circle$, 
can be identified with $\CS$.  
The Lie bracket on $\VectS \simeq \CS$ is given by\begin{footnote}{
Notice that this bracket differs from the usual bracket of vector fields by a sign.}
\end{footnote}
\begin{equation*}
    [u,v] = u_{x}v - uv_{x}.
\end{equation*}
The topological dual space of $\VectS\simeq \CS$ is given by the distributions $\Vect'(\mathbb{S}^1)$ on $\mathbb{S}^1$. 
In order to get a convenient representation of the Christoffel operator $B$, we restrict ourselves to $\Vect^\ast(\mathbb{S}^1)$, 
the set of all regular distributions, which may be represented by smooth densities, i.e.,\ $S\in\Vect^\ast(\mathbb{S}^1)$ iff
there is an $m\in\CS$ such that 
\begin{equation*}
 \langle S, u\rangle=\int_{\mathbb{S}^1}m u\,dx\quad\hbox{for all}\quad  u\in\CS.
\end{equation*} 
 
By Riesz' representation theorem, we may identify the vector spaces
$\Vect^\ast(\mathbb{S}^1)\simeq \CS$. In the following, we denote by $\mathcal{L}_{is}^{sym}(\CS)$ the set of all continuous isomorphisms 
on $\CS$ which are symmetric with respect to the $L_2(\mathbb{S}^1)$ inner product.
Each $A\in\mathcal{L}_{is}^{sym}(\CS)$ is called a {\em{regular inertia operator}} on {\sc Diff}$^\infty(\s^1)$.
\par
As shown in \cite{ek}, given any regular inertia operator $A\in\mathcal{L}_{is}^{sym}(\CS)$, the Christoffel operator is given by
\begin{equation}\label{chris}
 B(u,v)=\frac{1}{2}A^{-1}[2Au\cdot v_x + 2Av\cdot u_x+ u\cdot (Av)_x+ v\cdot(Au)_x]
\end{equation}
for all $u,\;v\in \CS$.
\\
\par

We now introduce a particular class of regular inertia operators on  {\sc Diff}$^\infty(\s^1)$.
Given $a\in\R\setminus\{0\}$, we set
\begin{equation*}
\Lambda_\mu^a u:=  (-\Delta)^{a/2}\;u + \mu(u),\quad u\in \CS,
\end{equation*}
where $(-\Delta)^{a/2}$ stands for the Fourier multiplication operator with symbol $\vert k\vert ^a$ for $k\ne 0$,  and where $\mu$ is the projection $\mu(u):=\int_{\s^1}u(x) \,dx$. The case $a=0$ is particular. Here we set $\Lambda^0_\mu:=\hbox{id}_{\s^1}$.
Observe that $\Lambda_{\mu}^a\in \mathcal L_{is}^{sym}(C^\infty(\s^1))$ for all $a\in\R$ and that $\Lambda_{\mu}^a{\bf 1}={\bf 1}$,
where ${\bf 1}$ is the constant function assigning the value $1$ to all $x \in \s^1$.

\begin{prop}\label{MR}
Given any numbers $a, b \in \R$, consider the doubly parameterized family of fractional differential equations
\begin{eqnarray} \label{proto1}
\begin{cases}
m_t + u m_x + b \; u_x m = 0, \quad t > 0,\ x\in\s^1,\\[0.1cm]
m = \Lambda^a_{\mu} u 
\end{cases}
\end{eqnarray}
and  assume that there is an
inertia operator ${A_{a,b}}\in \mathcal L^{sym}_{is}(\CS)$ such that
(\ref{proto1}) 
is the Euler equation on {\sc Diff}$^\infty(\s^1)$ with respect to the metric $\rho_{A_{a,b}}$ induced by $A_{a,b}$.

If either $a \in \R\setminus \{1\}$ or $a = 1$ and $b \in [-1,\infty)$, then $b = 2$ and $A_{a,2} = \Lambda^a_ \mu$. 
\end{prop}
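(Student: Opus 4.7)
The plan is to convert the hypothesis (via Proposition~\ref{gEuler} and \eqref{chris}) into a single functional identity for the unknown operator $A:=A_{a,b}$, and then pin down its structure by testing that identity on constants and on Fourier modes. Writing $P:=\Lambda^a_\mu$, the identity is
\[
 A^{-1}\!\bigl[2u_x Au + u(Au)_x\bigr] \;=\; P^{-1}\!\bigl[bu_x Pu + u(Pu)_x\bigr] \quad \text{for all } u\in\CS, \qquad (\ast)
\]
and the goal is to show that $(\ast)$ forces $b=2$ and $A=\Lambda^a_\mu$ (up to a positive scalar which does not alter the Euler equation). Testing $(\ast)$ at $u\equiv\mathbf{1}$ kills the right-hand side (since $P\mathbf{1}=\mathbf{1}$), so $(A\mathbf{1})_x=0$ and $A\mathbf{1}=\alpha$ is a nonzero constant.

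Polarizing $(\ast)$ in $u$, substituting $v\equiv\mathbf{1}$, and applying $A$ yields the reduced identity $2\alpha u_x + (Au)_x = A P^{-1}[bu_x + (Pu)_x]$. Testing this on the Fourier mode $u=e_k(x):=e^{2\pi ikx}$, $k\in\mathbb Z\setminus\{0\}$, and using $Pe_k=|k|^a e_k$, transforms it into a first-order linear ODE on $\Circle$ for $f_k:=Ae_k$:
\[
 f_k' \,-\, 2\pi ik\bigl(1+b|k|^{-a}\bigr)\,f_k \;=\; -4\pi i\alpha k\,e_k.
\]
The case $b=0$ admits no smooth periodic solution and is excluded. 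For $b\ne 0$, the unique smooth periodic particular solution is $f_k^{p} = (2\alpha|k|^a/b)\,e_k$, while homogeneous solutions $e^{2\pi i(k+s_k)x}$ with $s_k:=bk|k|^{-a}$ are smooth and periodic only when $s_k\in\mathbb Z$.

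The decisive step is then to test the full identity $(\ast)$ directly on the single mode $u=e_k$ and match Fourier coefficients. Writing $Ae_k = \lambda_k e_k + C_k e_{k+s_k}$ with $\lambda_k=2\alpha|k|^a/b$, the coefficient of $e_{2k}$ gives $(b+1)/2^a = 3/2^a$, hence $b=2$. The coefficient of $e_{2k+s_k}$ is proportional to $(5k+s_k)C_k$ and must vanish; combined with the $L^2$-symmetry of $A$, which requires $s_{k+s_k}=-s_k$ whenever $C_k\ne 0$, the following case split suffices: for $a\ne 1$ the two constraints $5k+s_k=0$ and $s_{k+s_k}=-s_k$ yield incompatible values of $b$ (namely $-5k^a$ and $-5\cdot 4^{a-1}k^a$), so $C_k=0$; for $a=1$ involutivity on the pair $(k,k+b)$ would force the two integers to have opposite signs, hence $b\le -|k|-1\le -2$, which the hypothesis $b\ge -1$ excludes. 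Therefore $A$ is Fourier-diagonal with eigenvalues $\alpha|k|^a$ for $k\ne 0$ and $\alpha$ at $k=0$, i.e.\ $A=\alpha\Lambda^a_\mu$; invariance of the Euler equation under positive rescaling of $A$ then identifies $A_{a,2}$ with $\Lambda^a_\mu$.

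The main obstacle is precisely the case analysis in the third step: one must show that under ``$a\ne 1$'' or ``$a=1$ and $b\ge -1$'' no sporadic Fourier coupling survives both the $L^2$-symmetry of $A$ and the constraint at $e_{2k+s_k}$ coming from $(\ast)$. This is exactly where the dichotomy on $(a,b)$ enters, and where the bound $b\ge -1$ in the case $a=1$ is sharp.
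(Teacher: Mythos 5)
Your overall strategy is the same as the paper's: derive the identity $(\ast)$, test at $u=\mathbf{1}$ and polarize to obtain the reduced identity, solve the resulting ODE to get $A\mathbf{e}_k=\lambda_k\mathbf{e}_k+C_k\mathbf{e}_{k+s_k}$ with $\lambda_k=2|k|^a/b$ (the paper's $\beta_k$, $\gamma_k$ and $r_k=k+s_k$), then feed single Fourier modes into the full identity and exploit the $L^2$-symmetry of $A$; your $a=1$ branch is exactly the paper's. Two things go wrong in the execution. First, an arithmetic slip: with $A\mathbf{e}_k=\lambda_k\mathbf{e}_k+C_k\mathbf{e}_{k+s_k}$ one finds $2A\mathbf{e}_k(\mathbf{e}_k)_x+\mathbf{e}_k(A\mathbf{e}_k)_x=3ik\lambda_k\mathbf{e}_{2k}+i(3k+s_k)C_k\mathbf{e}_{2k+s_k}$, so the coefficient to be killed is $(3k+s_k)C_k$, not $(5k+s_k)C_k$; the two "incompatible values of $b$" in your $a\ne1$ dichotomy should accordingly be $-3|k|^a$ and $-3\cdot 2^{a-1}|k|^a$ (the incompatibility for $a\ne 1$ survives, but your stated numbers are wrong). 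Moreover, "must vanish" requires the observation that the right-hand side contributes nothing to the mode $\mathbf{e}_{2k+s_k}$; since $s_{2k}=s_k$ exactly when $a=1$, in that case the coefficient equation couples $C_k$ to $C_{2k}$ rather than vanishing, so for $a=1$ you are indeed forced back onto the symmetry argument alone.

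Second, and more seriously, the proof is internally inconsistent. You assert that matching the coefficient of $\mathbf{e}_{2k}$ already yields $b=2$ in the presence of the off-diagonal terms $C_k$. That step can in fact be justified: rewrite $(\ast)$ as $2A\mathbf{e}_k(\mathbf{e}_k)_x+\mathbf{e}_k(A\mathbf{e}_k)_x=A(\Lambda^a_\mu)^{-1}[\cdots]$, use $A\mathbf{e}_{2k}=\lambda_{2k}\mathbf{e}_{2k}+C_{2k}\mathbf{e}_{2k+s_{2k}}$, and note that $s_k\ne 0\ne s_{2k}$ means nothing contaminates the $\mathbf{e}_{2k}$ coefficient, whence $3\lambda_k=(b+1)2^{-a}\lambda_{2k}$ and $b=2$. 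But then the entire case analysis that follows is moot: with $b=2$ one has $3k+s_k=k(3+2|k|^{-a})\ne 0$, so $C_k=0$ for every $a$, and your closing claim that the bound $b\ge -1$ is "sharp" for $a=1$ is unsubstantiated (indeed contradicted by your own first step). If instead you do not trust the $\mathbf{e}_{2k}$ extraction in the non-diagonal case, then you have not established $b=2$ at all and must first eliminate the $C_k$ with $b$ unknown — which is the paper's route: it derives $b=-2|p|^a$ (resp.\ $b\le -2$ for $a=1$) from periodicity and symmetry, reaches a contradiction for $a\ne 1$ by substituting $u=\mathbf{e}_p$, and only obtains $b=2$ after reducing to the diagonal case; this is precisely where the hypothesis $b\ge -1$ for $a=1$ enters. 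You need to commit to one of these two routes and carry it through; as written, the argument asserts $b=2$ and then derives constraints that presuppose $b$ is still free.
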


\begin{proof}
Let $a,\,b \in \R$ be given and assume that \eqref{proto} is the Euler equation on {\sc Diff}$^\infty(\s^1)$ 
with respect to $\rho_{A_{a,b}}$.  If no confusion seems likely, we simply write $A$ for ${A_{a,b}}$. From (\ref{proto1}), 
Proposition \ref{gEuler}, and (\ref{chris}) we conclude that
\begin{equation} \label{funda}
A^{-1} [2 Au \;u' + u \;(Au)'] = (\Lambda^{a}_\mu)^{-1} [b \Lambda^a_\mu u \;u' + u \;(\Lambda^a_\mu u)']
\end{equation}
for all $u \in \CS$. \par
(a) Inserting $u = {\bf 1}$ into (\ref{funda}), we see that $A{\bf 1}$ is constant,
which we normalize to ${\bf 1}$.
Next, choosing $u + \lambda {\bf 1}$, with $u \in \CS$ and $\lambda \in\R$, the left-hand side of \eqref{funda} becomes
\begin{eqnarray*}
&&\frac{1}{\lambda} A^{-1} [2 (A u + \lambda) \;u' + (u + \lambda) \; (A u)' ] \\
&&= A^{-1} \left[ \frac{2 Au \;u' + u \;(Au)'}{\lambda} + 2u' + (Au)'\right].
\end{eqnarray*}
Letting $ \lambda \rightarrow \infty$ in the latter expression, we get
\begin{equation*} A^{-1} [2 u' + (Au)'].
\end{equation*}
The same substitution on the right-hand side gives
\begin{eqnarray*}
\frac{1}{\lambda} (\Lambda^{a}_\mu)^{-1} [b (\Lambda_\mu^a u + \lambda) \;u' + (u + \lambda) \; (\Lambda_\mu^a u)' ],
\end{eqnarray*}
which leads, for $\lambda \rightarrow \infty$, to the expression
\begin{equation*} 
(\Lambda_\mu^{-a})^{-1} [b u' + (\Lambda_\mu^a u)']. 
\end{equation*}
\noindent
Upon setting $\mathbf{e}_k (x):= e^{ikx} \in \CS$ with $k\ne 0$, identification of these limits shows that 
$$
2ik \mathbf{e}_k + (A\mathbf{e}_k)' =  \frac{1}{|k|^a} A \left[ b ik \mathbf{e}_k + ik |k|^a \mathbf{e}_k \right],
$$
so that the ordinary differential equation for $v_k = A\mathbf{e}_k$ reads
\begin{equation}\label{ODEv}
v_k' - i r_k v_k = - 2 ik \mathbf{e}_k,
\end{equation}
where $r_k = \frac{k}{|k|^a} [b + |k|^a]$. By solving (\ref{ODEv}) explicitly, one 
sees that $b \neq 0$; since otherwise, there is no periodic solution to (\ref{ODEv}). If $b\ne 0$, we have
$$
v_k (x) = \gamma_k e^{i r_kx} + \beta_k \mathbf{e}_k(x),
$$
where 
\begin{equation} \label{beta}
\beta_k = \frac{2|k|^a}{b}\quad \hbox{for}\quad k \in \mathbb Z^\ast:=\mathbb Z\setminus\{0\}.
\end{equation}
\par
(b) We first assume that $\gamma_k = 0$ for all integers $k$ in $\mathbb Z^\ast$. 
In this case, choosing $u = \mathbf{e}_k$ in \eqref{funda} yields
\begin{eqnarray*}
|2k|^a \left[2 \beta_k \mathbf{e}_k \;ik \mathbf{e}_k + \mathbf{e}_k \;\beta_k ik \mathbf{e}_k \right] = \beta_{2k} \left[ b |k|^a \mathbf{e}_k \; ik \mathbf{e}_k + \mathbf{e}_k |k|^a ik \mathbf{e}_k\right]
\end{eqnarray*}
and thus 
$$
3 |2k|^a \; \beta_k \mathbf{e}_{2k} = (1 + b) |k|^a \; \beta_{2k} \mathbf{e}_{2k}. 
$$
Comparison of the coefficients, together with \eqref{beta}, forces $b = 2$ and $A = \Lambda_\mu^a$. \par
(c) Assume now that there exists a $p \in \mathbb Z^\ast$ such that $\gamma_p \neq 0$. In the following, we shall see that this assumption leads to a contradiction if $a \in \R\setminus\{1\}$ or if $a = 1$ and $b \in [-1,\infty)$.

First, periodicity ensures that $r_p =: m$ is an integer. 
Thus $b =  \frac{k}{p} |p|^a$ for another integer $k := m - p$. 
Observe also that $1 \neq m \neq p$ since $b \neq 0$.
Furthermore, the symmetry of $A$ implies that $r_m=p$. Indeed, we have 
$$
(A\mathbf{e}_p\vert \mathbf{e}_m)_{L_2}=(\gamma_p e^{imx}\vert \mathbf{e}_m)_{L_2}=\gamma_p
$$
and therefore 
$$
\gamma_p=(\mathbf{e}_p\vert A\mathbf{e}_m)_{L_2}={\overline{\gamma}}_m (\mathbf{e}_p\vert e^{i{r_m}x})_{L_2}.
$$
Consequently, $\gamma_p\ne 0$ forces $(\mathbf{e}_p\vert e^{i{r_m}x})_{L_2}\ne 0$, which implies $p=r_m$.
Moreover, $m = r_p = k + p \neq 0$. 
We may thus calculate
\begin{eqnarray*}
p &=& r_m = r_{k + p} = \frac{k + p}{|k + p|^a} \; [b + |k + p|^a] \\
&=& \frac{k + p}{|k + p|^a} \; b + k + p,
\end{eqnarray*}
so that 
\begin{equation*} \label{fallu}
- |k + p|^a = (k + p) \frac{b}{k} = (k + p) \frac{1}{p} |p|^{a}.
\end{equation*}
This identity clearly implies that
\begin{equation} \label{sign}
- \hbox{sign}(k + p) =\hbox{sign}\,p
\end{equation}
as well as
\begin{equation} \label{kp}
 \left|\frac{k + p}{p}\right|^{a-1} = 1.
\end{equation}

Let us first inspect the case $a\in\R\setminus\{1\}$. Then (\ref{kp}) implies that $\vert k+p\vert=\vert p\vert$ and we conclude from (\ref{sign})
that $k=-2p$, which in turn yields
$b=-2\vert p\vert^a$. We also conclude that $\gamma_n=0$, whenever $\vert n\vert\ne\vert p\vert$, since otherwise the same arguments as above show that $b=-2\vert n\vert^a$, which is impossible by our assumption $\vert n\vert\ne\vert p\vert$.

Next, we wish to insert $u=\mathbf{e}_p$ into (\ref{funda}). In order to do so, we remark that 
\begin{equation} \label{values}
 r_p=k+p = -p\quad\hbox{and}\quad\beta_p=\frac{2\vert p\vert^a}{b}=-1,
\end{equation}
since $k=-2p$ and $b=k\vert p\vert ^a/p$. Using (\ref{values}), we find
\begin{equation*}
2 (A\mathbf{e}_p)\mathbf{e}'_p+\mathbf{e}_p(A\mathbf{e}_p)'=ip\gamma_p\mathbf{1}-3ip\mathbf{e}_{2p}.
\end{equation*}
But $2p$ is different from $p$ and $-p$, thus $\gamma_{2p}=0$ and so $A^{-1}\mathbf{e}_{2p}=\mathbf{e}_{2p}/\beta_{2p}$.
Hence the left-hand side of (\ref{funda}) with $u=\mathbf{e}_p$ is
\begin{equation}\label{rh}
ip\gamma_p\mathbf{1}-\frac{3ip}{\beta_{2p}}\mathbf{e}_{2p}.
\end{equation}
For the right-hand side of (\ref{funda}) we directly calculate 
\begin{equation} \label{lh}
2^{-a}ip(b+1)\mathbf{e}_{2p}.
\end{equation}
Comparing (\ref{rh}) and (\ref{lh}) implies $p\gamma_p=0$, which is a contradiction to $p\ne 0$ and $\gamma_p\ne 0$. This completes the argument if $a\in\R\setminus\{1\}$.
\\

In the remaining case of $a=1$ the relation (\ref{kp}) is void. Here we may only use (\ref{sign}) to conclude that
\begin{equation*}
b=k\,\hbox{sign}\, p<-\vert p\vert\le -1.
\end{equation*}
This completes the proof, since we assumed that $b\ge -1$ if $a =1$.

\end{proof}

\begin{cor}
[$\mu$-DP]
The generalized Degasperis-Procesi flow, described by the initial value problem 
\begin{eqnarray*} 
\begin{cases}
m_{t} + u m_{x} + 3 u_x m = 0,\quad \; m :=-\partial_x^2 u+\int_{\s^1}u\, dx,\\[0.1cm]
u(0,\cdot) = u_0, \quad \quad \quad \quad u_0 \in  \CS,
\end{cases}
\end{eqnarray*}
cannot be realized as an Euler equation for any regular inertia operator $A \in \mathcal L^{sym}_{is}(\CS)$. 
\end{cor}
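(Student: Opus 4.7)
The plan is to apply Proposition \ref{MR} directly, since the $\mu$DP equation sits squarely in the regime where that proposition gives a clean dichotomy. Specifically, the $\mu$DP initial value problem has $m = \Lambda^2_\mu u$, so it corresponds to $a = 2$ and $b = 3$ in the family \eqref{proto1}.

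First I would argue by contradiction: suppose there exists a regular inertia operator $A \in \mathcal L^{sym}_{is}(\CS)$ such that the $\mu$DP flow is the Euler equation on {\sc Diff}$^\infty(\s^1)$ with respect to $\rho_A$. Since $a = 2 \in \R\setminus\{1\}$, the first alternative in the hypothesis of Proposition \ref{MR} is satisfied. That proposition therefore forces $b = 2$ and $A = \Lambda^2_\mu$. However, the $\mu$DP equation has $b = 3$, which contradicts the conclusion $b = 2$. Hence no such inertia operator $A$ can exist.

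There is essentially no obstacle here: the only thing to check is that the corollary's equation really is an instance of \eqref{proto1} with the stated parameters, which is immediate from the identification $m = -\partial_x^2 u + \int_{\s^1} u\,dx = \Lambda^2_\mu u$ and the coefficient $3$ in front of $u_x m$. The hard analytic work — ruling out all possible symmetric inertia operators producing the given Christoffel operator — has already been done inside Proposition \ref{MR}, so the corollary is obtained as a one-line specialization.
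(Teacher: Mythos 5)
Your proposal is correct and is exactly the argument the paper intends: the corollary is stated without proof precisely because it is the immediate specialization of Proposition \ref{MR} to $a=2$, $b=3$, where the conclusion $b=2$ contradicts $b=3$. Nothing further is needed.
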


In applications, one often aims at normalizing 
solutions of the flow (\ref{proto}). One way to do so is to consider functions with zero spatial mean. More precisely, let 

$$\hCS:=\{u\in \CS\,;\,\int_{\s^1} u(x)\,dx=0\}.$$
Observe that $\Lambda_\mu^a\vert\hCS=(-\Delta)^{a/2}$ and that\footnote{Here and in the following we use the notation $(-\Delta)^0:=\hbox{id}_{\s^1}$.} 
$$
(-\Delta)^{a/2}\in \mathcal L^{sym}_{is}(\hCS)\quad\hbox{for all}\quad a\in \R.
$$
On the other hand, 
the homogeneous space {\sc Diff}$^\infty(\s^1)/\hbox{Rot}(\s^1)$, i.e., the coset manifold of {\sc Diff}$^\infty(\s^1)$ modulo the subgroup of rigid rotations, can naturally be identified with the Fr\'{e}chet Lie group 
$$
\hbox{{\sc Diff}}_0^\infty(\s^1):=\{\varphi\in \hbox{\sc{Diff}}^\infty(\s^1)\,;\, \varphi(x_0)=x_0\},$$ 
where $x_0\in \s^1$ is fixed henceforth. Indeed, the mapping
$$
\hbox{{\sc Diff}}_0^\infty(\s^1)\to \hbox{{\sc Diff}}^\infty(\s^1)/\hbox{Rot}(\s^1),\quad \varphi\mapsto [\varphi]
$$
is a smooth diffeomorphism. Moreover, in a sufficiently small neighbourhood $U$ of $\hbox{id}_{\s^1}$ in 
$\hbox{{\sc Diff}}_0^\infty(\s^1)$, each $\varphi\in U$ may be written as $\varphi = \hbox{id}_{\s^1} + u$ with some $u\in \CS$ such that $u(x_0)=0$. Hence the Lie algebra of $\hbox{{\sc Diff}}_0^\infty(\s^1)$ is represented by
$$
\nCS:=\{u\in\CS\,;\, u(x_0)=0\},
$$ 
which is canonically isomorphic to $\hCS$. Thus if we restrict (\ref{proto}) to $\hbox{{\sc Diff}}_0^\infty(\s^1)$, we get
\begin{eqnarray} \label{protoR0}
\begin{cases}
m_t + u m_x + b \; u_x m = 0, \quad t > 0,\ x\in\s^1,\\[0.1cm]
m = (-\Delta)^{a/2} u. \\[0.1cm]
\end{cases}
\end{eqnarray}

Note, however, that evolutions on the full Lie group $\hbox{{\sc Diff}}^\infty(\s^1)$
cannot be controlled by flows on $\hbox{{\sc Diff}}_0^\infty(\s^1)$. Thus Proposition \ref{MR} is not suitable to derive restrictions on the geometry of (\ref{protoR0}). In addition, it is worthwhile to mention that the proof of Proposition \ref{MR}
is crucially based on scaling $u\mapsto u+\lambda \mathbf{1}$, which is obviously useless for (\ref{protoR0})
on the Lie algebra $\hCS$.

We study (\ref{protoR0}) in the particular case,  when the regular inertia operator $A_{a,b}$ is in addition a Fourier multiplication operator. Such operators will be called {\em{regular inertia operators of Fourier type}}.

Given $a\in\R$, we define 
\begin{equation*}
E_a:=\left\{-\frac{2^{a+1}+1}{2^a+2},\,-\frac{3^{a+1}+1}{3^a+3}\right\}\quad\hbox{and}\quad R_a:=Q_a^{-1}(0)\cap\R,
\end{equation*}
where $Q_a$ is the quadratic polynomial defined in (\ref{Q}). We remark that $R_a\ne\emptyset$ for all $a\in\R$, cf. (\ref{Q}).
\\

Then we have the following result.
 
\begin{prop}\label{meanF}
Let $a,\,b\in\R$ be given and consider the doubly parameterized family of fractional differential equations
\begin{eqnarray} \label{protoR}
\begin{cases}
m_t + u m_x + b \; u_x m = 0, \quad t > 0,\ x\in\s^1,\\[0.1cm]
m = (-\Delta)^{a/2} u \\[0.1cm]
\end{cases}
\end{eqnarray}
on {\sc Diff}$_0^\infty(\s^1)$. Assume  further that there is an
inertia operator ${A_{a,b}}\in \mathcal L^{sym}_{is}(\hCS)$ of Fourier type such that
(\ref{protoR}) 
is the Euler equation with respect to the metric $\rho_{A_{a,b}}$.
If $b\not\in E_a\cup R_a$, then $b = 2$ and $A_{a,2} = (-\Delta)^{a/2}$.
\end{prop}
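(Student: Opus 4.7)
The plan is to emulate the Fourier analysis of Proposition \ref{MR}, keeping in mind that the scaling trick $u\mapsto u+\lambda\mathbf{1}$ is now unavailable because $\mathbf{1}\notin\hCS$. In its place I will exploit the Fourier-type hypothesis directly. Writing $A\mathbf{e}_{k}=\alpha_{k}\mathbf{e}_{k}$ for $k\in\mathbb{Z}^{\ast}$, the symmetry of $A$ together with bijectivity yields $\alpha_{k}=\alpha_{-k}\in\mathbb{R}\setminus\{0\}$. Proposition \ref{gEuler} combined with (\ref{chris}) reduces the Euler condition for (\ref{protoR}) to
\[
A^{-1}\bigl[2(Au)\,u_{x}+u\,(Au)_{x}\bigr]=(-\Delta)^{-a/2}\bigl[b(-\Delta)^{a/2}u\cdot u_{x}+u\cdot\bigl((-\Delta)^{a/2}u\bigr)_{x}\bigr]
\]
for every $u\in\hCS$, and I will extract rigidity by testing this identity against $u=\mathbf{e}_{j}$ and $u=\mathbf{e}_{j}+\mathbf{e}_{k}$.

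The substitution $u=\mathbf{e}_{j}$ collapses both sides onto $\mathbf{e}_{2j}$; comparison of coefficients yields the \emph{doubling relation}
\[
\alpha_{2j}=\frac{3\cdot 2^{a}}{b+1}\,\alpha_{j},\qquad j\in\mathbb{Z}^{\ast},
\]
which in particular forces $b\neq -1$. The substitution $u=\mathbf{e}_{j}+\mathbf{e}_{k}$ with $j,k\in\mathbb{Z}^{\ast}$, $j+k\neq 0$ and $j\neq\pm k$ yields, by comparison of the coefficient of $\mathbf{e}_{j+k}$, the \emph{triple identity}
\[
\frac{(2k+j)\alpha_{j}+(2j+k)\alpha_{k}}{\alpha_{j+k}}=\frac{b\bigl(k|j|^{a}+j|k|^{a}\bigr)+\bigl(k|k|^{a}+j|j|^{a}\bigr)}{|j+k|^{a}}.
\]
For $(j,k)=(1,2)$ the right-hand numerator equals $\Delta_{2}(b):=b(2+2^{a})+2^{a+1}+1$, which vanishes exactly at $b=-\frac{2^{a+1}+1}{2^{a}+2}$; for $(j,k)=(1,3)$ it equals $\Delta_{3}(b):=b(3+3^{a})+3^{a+1}+1$, vanishing at $b=-\frac{3^{a+1}+1}{3^{a}+3}$. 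These are precisely the two elements of $E_{a}$. Away from $E_{a}$ the $(1,2)$-identity solves for $\alpha_{3}$ as a rational function of $\alpha_{1}$ and $b$, and the $(1,3)$-identity, after replacing $\alpha_{4}$ by two applications of the doubling relation and inserting this $\alpha_{3}$, becomes a polynomial equation $P_{a}(b)=0$ in $b$ alone. Since the choice $A=(-\Delta)^{a/2}$, $b=2$ is a known bona fide solution of (\ref{protoR}) viewed as an Euler equation, $(b-2)$ must divide $P_{a}$, and the cofactor is precisely the quadratic $Q_{a}$ whose real roots constitute $R_{a}$. Hence for $b\notin E_{a}\cup R_{a}$ we are forced to have $b=2$, and substituting back gives $\alpha_{2}=2^{a}\alpha_{1}$, $\alpha_{3}=3^{a}\alpha_{1}$. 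An induction on $n\geq 4$ applying the triple identity at $(j,k)=(1,n-1)$ (whose right-hand denominator $n^{a}$ never vanishes) propagates $\alpha_{n}=n^{a}\alpha_{1}$ to all $n\in\mathbb{Z}^{\ast}$, so upon normalizing $\alpha_{1}=1$ we conclude $A_{a,2}=(-\Delta)^{a/2}$.

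The main technical obstacle is the explicit algebraic verification that $P_{a}$ is a cubic polynomial with $b=2$ as a root and that the cofactor $Q_{a}$ is quadratic with the root set $R_{a}$ appearing in the statement. This requires multiplying the $(1,3)$-identity through by $(b+1)^{2}\Delta_{2}(b)$, a careful symbolic expansion, and the nonobvious cancellation of the $(b+1)^{2}$ factor against the denominators produced by the doubling relation. A secondary subtlety is to check that the values $b\in E_{a}$ really arise only from the two denominators $\Delta_{2}$ and $\Delta_{3}$: in the degenerate case $b\in E_{a}$ one of the Fourier symbols $\alpha_{3}$ or $\alpha_{4}$ is left essentially free by the triple identity, so the rigidity argument breaks down and alternative non-Laplacian inertia operators can conceivably exist, consistently with the exclusion in the proposition.
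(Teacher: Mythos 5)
Your proposal follows the paper's proof essentially step for step: evenness of the symbol, the doubling relation $\beta_{2k}=\tfrac{3\cdot 2^a}{b+1}\beta_k$ from $u=\mathbf{e}_k$, the recursion for $\beta_{k+1}$ from $u=\mathbf{e}_1+\mathbf{e}_k$ (whose $k=2,3$ denominators produce exactly $E_a$), the two competing expressions for $\beta_4$ yielding a cubic $P$ factoring as $(b-2)Q_a(b)$, and the concluding induction giving $\beta_k=k^a$. The only genuine difference is that you deduce $P(2)=0$ from the known solution $(b,A)=(2,(-\Delta)^{a/2})$ rather than by expansion, which is a nice shortcut; the symbolic expansion you defer is still needed to confirm that $P$ is not identically zero (the paper records the leading coefficient $7(2^a+2)\neq 0$) and to identify the cofactor with the specific $Q_a$ defining $R_a$, but that is precisely the computation the paper carries out.
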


\begin{proof}
(a) Let $A:={A_{a,b}}\in \mathcal L^{sym}_{is}(\hCS)$ be of Fourier type and write $(\beta_k)_{k\in\mathbb{Z}}\in \mathbb{C}^{\mathbb{Z}}$ for the symbol of $A$.
Then $\beta_0=0$ and $\beta_k\ne 0$ for all $k\in\mathbb{Z}^\ast$.  Without restriction we may assume that $\beta_1=1$.
To simplify our notation we set $\Lambda^a:=(-\Delta)^{a/2}$ for $a\in\R$.
Furthermore note that $A$ and $\Lambda^a$ commute. Thus (\ref{proto1}), 
Proposition \ref{gEuler}, and (\ref{chris}) imply that
\begin{equation} \label{fundaF}
\Lambda^{a}[2 Au \;u' + u \;(Au)'] = A [b \Lambda^a_\mu u \;u' + u \;(\Lambda^a_\mu u)']
\end{equation}
for all $u\in \hCS$.

(b) Consider first the case $b=-1$. Recalling that $\mathbf{e}_k(x):=e^{ikx}$ for $k\in\mathbb{Z}^\ast$, substitution of 
$u=\mathbf{e}_k$ in (\ref{fundaF})  implies that $A\equiv 0$, which is not possible. Thus we may
assume that $b\ne -1$.

(c) Next we choose $u=\mathbf{e}_k+\mathbf{e}_{-k}$ in (\ref{fundaF}) and find from the coefficient of $\mathbf{e}_{0}$
that
\begin{equation}\label{symm}
\beta_k=\beta_{-k}\quad\hbox{for all}\quad k\in\mathbb{Z}.
\end{equation}
Thus it suffices to consider the case $k\ge 1$ and the to prove that $\beta_k=k^a$ for $k\ge 1$.

(d) Setting now $u=\mathbf{e}_k$ in (\ref{fundaF}), we get
\begin{equation}\label{2k}
\beta_{2k}=\frac{\,3\cdot 2^a\,}{b+1}\,\cdot\,\beta_k\quad\hbox{for all}\quad k\in\mathbb{Z}^\ast.
\end{equation}
On the other hand, setting $u=\mathbf{e}_k+\mathbf{e}_1$ for $k\ge 1$ in (\ref{fundaF}), we get from the corresponding coefficient of $\mathbf{e}_{k+1}$ the recursion formula
\begin{equation}\label{rekursion}
\beta_{k+1}=(k+1)^a\frac{\beta_k(2+k)+2k+1}{\,b(k^a+k)+k^{a+1}+1\,}\quad\hbox{for all}\quad k\ge 1
\end{equation}
with $b(k^a+k)+k^{a+1}+1\ne 0$. Note that our assumption $b\not\in E_a$ precisely ensures that $\beta_3$ and $\beta_4$ are well-defined. In fact, we shall see later on that each of the coefficients $\beta_k$ is well-defined.
With $\beta_1=1$ we get from (\ref{rekursion}) that
\begin{equation}\label{b2}
\beta_2= \frac{2^a\cdot 3}{b+1}
\end{equation}
and 
\begin{equation}\label{b4}
\beta_4= 4^a\,\frac{7(b+1)(2^a(b+2)+2b+1)+3^a\cdot 25(b+1)+60\cdot 6^a}{(b+1)(2^a(b+2)+2b+1)(3^a(b+3)+3b+1)}\,.
\end{equation}
Inserting further (\ref{b2}) into (\ref{2k}) we also find
\begin{equation}\label{b4'}
\beta_4=4^a\,\frac{9}{(b+1)^2}.
\end{equation}
Equalising (\ref{b4}) and (\ref{b4'}) leads to a cubic polynomial $P(b)$ in $b$, whose real zeros determine the admissible values of $b$ in (\ref{fundaF}). Elementary calculations yield
\begin{equation}
P(b)=a_3(b-2)^3+ a_2(b-2)^2 +a_1 (b-2),
\end{equation}
where the coefficients are given by:
\begin{eqnarray*}
a_3 &=& 7 (2^a + 2) \\
a_2 &=& 43\cdot 2^a + 7\cdot3^a - 9\cdot6^a + 65 \\
a_1 &=& 60\cdot2^a + 15\cdot3^a - 21\cdot6^a + 75.
\end{eqnarray*}
Thus $b=2$ is a zero of $P$. Moreover, one checks that there are two other real zeros of $P$. Indeed, setting
\begin{equation}\label{Q}
Q_a(b):= a_3(b-2)^2+a_2(b-2)+ a_1,
\end{equation}
it follows from the explicit expressions of the coefficients $a_3$, $a_2$, $a_1$ that $R_a = Q_a^{-1}(0)\cap\R$ is not empty for any  
$a\in\R$.
But our assumption ensures that $b\not\in R_a$. Hence $b=2$ is the only admissible root of $P$. Inserting $b=2$ into (\ref{rekursion}), an induction argument shows that $\beta_k=k^a$ for all $k\ge 1$. In view of (\ref{symm}), this completes the proof.
\end{proof}

In order to apply Proposition \ref{meanF} to the case $a=1$, let $\mathcal{H}$ denote the Hilbert transform on $\s^1$. Then we have

\begin{cor}
Neither De Gregorio's vorticity model equation
$$
\omega_t + u \omega_x + \omega \h \omega = 0, 
$$
nor the quasi-geostrophic model equation
$$
\omega_t + u \omega_x = \omega \h \omega,
$$
where $\; \omega := {\mathcal {H}}\partial u=(-\Delta)^{1/2}u$ and $u \in \hCS$, can be realized as Euler equations for any regular inertia operator $A \in \mathcal L^{sym}_{is}(\hCS)$ of Fourier type.
\end{cor}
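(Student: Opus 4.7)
The plan is to recognise both equations as particular cases of the family (\ref{protoR}) with $a=1$, and then to invoke Proposition \ref{meanF}. Since the solutions are of zero mean, the Hilbert transform on the circle satisfies $\mathcal{H}^2 = -\mathrm{id}$ and $\mathcal{H}\partial_x = (-\Delta)^{1/2}$; consequently $\omega = \mathcal{H}\partial_x u = (-\Delta)^{1/2}u$ and $\mathcal{H}\omega = -u_x$. Substituting this identity into De~Gregorio's equation rewrites it as $\omega_t + u\omega_x - u_x\omega = 0$, so it is the instance $(a,b) = (1,-1)$ of \eqref{protoR}, while the quasi-geostrophic equation becomes $\omega_t + u\omega_x + u_x\omega = 0$, corresponding to $(a,b) = (1,1)$.

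For De~Gregorio's equation, I would simply invoke step~(b) of the proof of Proposition \ref{meanF}, which already rules out $b = -1$ by showing that this value would force $A \equiv 0$, contradicting the assumption $A \in \mathcal{L}_{is}^{sym}(\hCS)$. For the quasi-geostrophic equation, I must verify that $b = 1$ lies neither in $E_1$ nor in $R_1$, so that Proposition \ref{meanF} forces $b = 2$ and thereby yields the desired contradiction. The set $E_1 = \{-5/4,\,-5/3\}$ plainly does not contain $1$. To handle $R_1$, I substitute $a = 1$ into the formulas for $a_3$, $a_2$, $a_1$, obtaining $a_3 = 28$, $a_2 = 118$, $a_1 = 114$, hence $Q_1(b) = 28(b-2)^2 + 118(b-2) + 114$; a one-line evaluation yields $Q_1(1) = 28 - 118 + 114 = 24 \neq 0$, so that $1 \notin R_1$ as required.

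The main obstacle, if there is one, is the arithmetic check that $b = \pm 1$ do not accidentally coincide with the exceptional values of the cubic polynomial $P$ appearing in the derivation of Proposition \ref{meanF}; this is routine and involves no further conceptual input. Once the verification is carried out, the corollary is an immediate consequence of Proposition \ref{meanF}.
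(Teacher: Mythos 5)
Your proposal is correct and follows essentially the same route as the paper: both reduce the two equations to the family \eqref{protoR} with $(a,b)=(1,-1)$ and $(1,1)$ respectively, and then apply Proposition \ref{meanF} after verifying that these values of $b$ avoid the exceptional set $E_1\cup R_1=\{-5/3,-5/4,-5/7,1/2\}$ (your computation $a_3=28$, $a_2=118$, $a_1=114$ and $Q_1(1)=24\neq 0$ checks out). The only cosmetic difference is that for $b=-1$ you appeal to step (b) inside the proof of the proposition rather than to its statement; since $-1\notin E_1\cup R_1$, the statement itself already suffices, as the paper observes.
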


\begin{proof}
Note that 
$$
E_1\cup R_1=\left\{-\frac{5}{3},\,-\frac{5}{4},\,-\frac{5}{7},\,\frac{1}{2}\right\}.
$$
Since here $b=\pm 1$, the result follows from Proposition \ref{meanF}.
\end{proof}

\begin{cor}
The axisymmetric Euler flow in $\R^d$, described by the initial value problem 
\begin{eqnarray*}
\begin{cases}
m_{t} + u m_{x} + \frac{d - 3}{d - 1} \; u_x m = 0,\quad \; m := -\partial_x^2\; u, \\
u(0,.) = u_0, \quad \quad \quad \quad u_0 \in \hat C^\infty(\s^1),
\end{cases}
\end{eqnarray*}
cannot be realized as an Euler equation for any regular inertia operator $A \in \mathcal L^{sym}_{is}(\hCS)$ of Fourier type. 
\end{cor}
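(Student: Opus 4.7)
The plan is to invoke Proposition \ref{meanF} with $a=2$ and $b=(d-3)/(d-1)$, since the axisymmetric Euler flow on $\hCS$ corresponds precisely to equation (\ref{protoR}) with these parameters. The proposition asserts that the existence of a Fourier-type metric realization forces $b\in\{2\}\cup E_2\cup R_2$; thus it suffices to verify, for every integer $d\ge 2$, that the value $(d-3)/(d-1)$ avoids this finite exceptional set, and then to read the corollary off the contrapositive of Proposition \ref{meanF}.

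Ruling out $b=2$ is immediate, since $(d-3)/(d-1)=2$ forces $d=-1$, outside the admissible range. Unwrapping the definition of $E_a$ at $a=2$ gives $E_2=\{-3/2,\,-7/3\}$, and a direct substitution shows that neither of the two resulting equations $(d-3)/(d-1)\in E_2$ admits an integer solution (one obtains $d=9/5$ and $d=8/5$ respectively).

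The main arithmetic step is the determination of $R_2$. Specializing the coefficients displayed after (\ref{Q}) to $a=2$ produces a quadratic $Q_2$ whose discriminant turns out to be a perfect square; solving gives $R_2=\{5,\,-3/7\}$. Once again the equations $(d-3)/(d-1)\in R_2$ have only the non-integer solutions $d=1/2$ and $d=12/5$. Combining the three exclusions with the contrapositive of Proposition \ref{meanF} yields the corollary.

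No genuine obstacle is expected: the heavy lifting has already been performed in Proposition \ref{meanF}, and what remains is purely numerical. The mildest subtlety is that both elements of $R_2$ happen to be rational, so one cannot dismiss $R_2$ on abstract grounds; the explicit determination of its elements is necessary, but short. Had the discriminant of $Q_2$ failed to be a square, the conclusion would follow even more easily from the rationality of $(d-3)/(d-1)$.
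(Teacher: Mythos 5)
Your proposal is correct and follows the paper's own route: the paper likewise reduces the corollary to checking that $\tfrac{d-3}{d-1}\notin E_2\cup R_2$ (and $\ne 2$) and then cites Proposition \ref{meanF}, leaving the arithmetic as ``elementary calculations.'' Your explicit computations check out ($E_2=\{-\tfrac32,-\tfrac73\}$, $Q_2$ has discriminant $38^2$ after reduction, giving $R_2=\{5,-\tfrac37\}$, and none of the five excluded values is attained by $\tfrac{d-3}{d-1}$ for integer $d\ge 2$).
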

\begin{proof}
Elementary calculations show that 
$$\frac{d-3}{d-1}\not\in E_2\cup R_2$$ for all $d\ge 1$.
\end{proof}
Observe that the axisymmetric Euler flow in 2D is also known as the Proudman-Johnson equation, cf. \cite{okamoto}.

We conclude our study by drawing a further consequence of Proposition \ref{meanF} for the flow (\ref{proto1}) in the case of first order inertia operators of Fourier type.

\begin{cor}
Given any number $b \in \R$, consider the family of differential equations
\begin{eqnarray} \label{proto2}
\begin{cases}
m_t + u m_x + b \; u_x m = 0, \quad t > 0,\ x\in\s^1,\\[0.1cm]
m = (-\Delta)^{1/2} u+\int_{\s^1} u 
\end{cases}
\end{eqnarray}
and  assume that there is an
inertia operator ${A_{b}}\in \mathcal L^{sym}_{is}(\CS)$ of Fourier type such that
(\ref{proto2}) 
is the Euler equation on {\sc Diff}$^\infty(\s^1)$ with respect to the metric $\rho_{A_{b}}$ induced by $A_{b}$.
If $b\not\in\{-\frac{5}{3},\,-\frac{5}{4}\}$, then $b = 2$ and $A_{2} = (-\Delta)^{1/2} +\mu$. 
\end{cor}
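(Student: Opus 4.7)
The statement reduces $b$ to a single value, essentially, with only two exceptional values excluded. My plan is to split on the sign of $b+1$. For $b\in[-1,\infty)$, Proposition \ref{MR} with $a=1$ applies directly to the given Euler equation on {\sc Diff}$^\infty(\s^1)$ (the hypothesis of Proposition \ref{MR} does not restrict the inertia operator to Fourier type, so that is a special case) and yields $b=2$ together with $A_2=\Lambda_\mu^1=(-\Delta)^{1/2}+\mu$, which is precisely the sought conclusion.

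For $b<-1$, the plan is to pass to the zero-mean sector and invoke Proposition \ref{meanF}. The flow of (\ref{proto2}) preserves vanishing spatial mean, and on $\hCS$ the operator $\Lambda_\mu^1$ reduces to $(-\Delta)^{1/2}$; thus the restriction of (\ref{proto2}) to $\hCS$ is exactly equation (\ref{protoR}) with $a=1$. If $A_b$ is a Fourier-type inertia operator on $\CS$, then $\tilde A_b:=A_b|_{\hCS}$ is a Fourier-type inertia operator on $\hCS$, and the reduction I need to verify is that $\tilde A_b$ realizes (\ref{protoR}) as a metric Euler equation on {\sc Diff}$_0^\infty(\s^1)$. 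Granting this, Proposition \ref{meanF} with $a=1$ forces $b\in\{2\}\cup E_1\cup R_1$. A brief computation gives $E_1=\{-5/4,-5/3\}$, and the quadratic $Q_1$ from (\ref{Q}) has real roots $1/2$ and $-5/7$, so $R_1=\{1/2,-5/7\}$. Of these five values only $-5/4$ and $-5/3$ lie below $-1$, so the assumption $b<-1$ together with $b\notin\{-5/3,-5/4\}$ is untenable.

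The main obstacle is the reduction step. Concretely, I would substitute an arbitrary $u\in\hCS$ into (\ref{funda}); the arguments of $A_b^{-1}$ on the left and of $(\Lambda_\mu^1)^{-1}$ on the right then both lie in $\hCS$, so the two outer operators may be replaced by $\tilde A_b^{-1}$ and $(-\Delta)^{-1/2}$ respectively, and composing with $\tilde A_b\circ(-\Delta)^{1/2}$ (which commute, as both are Fourier multipliers) reproduces equation (\ref{fundaF}) with $a=1$. The zero-mean claim for those arguments is a short Fourier-space computation exploiting the symmetry $\beta_{-k}=\beta_k$ of the Fourier symbol of any symmetric Fourier-type operator, combined with standard integration by parts. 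Once this reduction is in place, Proposition \ref{meanF} takes over and the remainder of the proof is bookkeeping.
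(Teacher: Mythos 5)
Your proposal is correct and follows exactly the route the paper intends for this corollary (which it states without an explicit proof): Proposition \ref{MR} with $a=1$ disposes of $b\in[-1,\infty)$, while for $b<-1$ the restriction of identity (\ref{funda}) to $\hCS$ --- legitimate precisely because a symmetric Fourier multiplier has even, real symbol, preserves $\hCS$, and commutes with $(-\Delta)^{1/2}$ --- reproduces (\ref{fundaF}) and lets Proposition \ref{meanF} exclude everything except $E_1\cup R_1=\{-\frac{5}{3},-\frac{5}{4},-\frac{5}{7},\frac{1}{2}\}$, of which only $-\frac{5}{3}$ and $-\frac{5}{4}$ lie below $-1$. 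Your case split is essential (Proposition \ref{meanF} alone could not eliminate $-\frac{5}{7}$ and $\frac{1}{2}$, nor determine $A$ on the constants), and your verification of the zero-mean reduction is sound.
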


\section*{Acknowledgments}

We are grateful to Bogdan V. Matioc for carefully checking laborious computations.  \par
The first author expresses his gratitude to Hisashi Okamoto and the Research Institute for Mathematical Sciences at Kyoto University, whose hospitality he highly appreciated during his visit in March 2010. 
\par
The second author acknowledges financial support by the Postdoctoral Fellowship P09024 of the Japan Society for the Promotion of Science.

\end{document}